\newcommand{\w}{\omega}
\newtheorem{theorem}{Theorem}
\newtheorem{corollary}{Corollary}
\newtheorem{example}{Example}
\title[Strongly $\sigma$-metrizable spaces are super $\sigma$-metrizable]{Strongly $\sigma$-metrizable spaces are\\ super $\sigma$-metrizable}
\author{Taras Banakh}
\address{Ivan Franko National University of Lviv, Ukraine}
\email{t.o.banakh@gmail.com}
\subjclass{54E35, 54A20, 54D30}
\keywords{strongly $\sigma$-metrizable space, super $\sigma$-metrizable space}
\begin{document}
\begin{abstract}
A topological space $X$ is called {\em strongly $\sigma$-metrizable} if $X=\bigcup_{n\in\w}X_n$ for an increasing sequence $(X_n)_{n\in\w}$ of closed metrizable subspaces such that every convergence sequence in $X$ is contained in some $X_n$. If, in addition, every compact subset of $X$ is contained in some $X_n$, $n\in\w$, then $X$ is called {\em super $\sigma$-metrizable}. Answering a question of V.K.~Maslyuchenko and O.I.~Filipchuk, we prove that a topological space is strongly $\sigma$-metrizable if and only if it is super $\sigma$-metrizable.
\end{abstract}
\maketitle

Following \cite{MMS} and \cite{Mas}, we define a topological space $X$ to be
\begin{itemize}
\item {\em $\sigma$-metrizable} if $X$ can be written as the union of an increasing sequence $(X_n)_{n\in\w}$ of closed metrizable subspaces of $X$;
\item {\em strongly $\sigma$-metrizable} if $X$ can be written as the union of an increasing sequence $(X_n)_{n\in\w}$ of closed metrizable subspaces of $X$ such that every convergent sequence in $X$ is contained in some space $X_n$;
\item {\em super $\sigma$-metrizable} if $X$ can be written as the union of an increasing sequence $(X_n)_{n\in\w}$ of closed metrizable subspaces of $X$ such that every compact subset of $X$ is contained in some space $X_n$.
\end{itemize}

It turns out that the last two notions are equivalent (which answers a question of V.K.~Maslyuchenko and O.I.~Filipchuk). In the proof we shall use a recent result of Alas and Wilson \cite{AW} (see also \cite{BN}) on sequentially compact spaces. We recall that a topological space $X$ is {\em sequentially compact} if each sequence in $X$ has a convergent subsequence.

\begin{theorem}[Alas, Wilson]\label{AW} Each hereditarily Lindel\"of compact space is sequentially compact.
\end{theorem}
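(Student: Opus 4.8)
The plan is to deduce sequential compactness from first countability, after showing that a compact hereditarily Lindel\"of space is first countable. Throughout I assume, as is standard in this context and as the ambient setting guarantees, that the compact space $X$ is Hausdorff, hence regular and normal. The heart of the argument is the implication that hereditary Lindel\"ofness forces every point of $X$ to be a $G_\delta$-set, which in a compact Hausdorff space upgrades to countable character.

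First I would fix a point $x\in X$ and show that $\{x\}$ is $G_\delta$. Since $X$ is $T_1$, the set $U:=X\setminus\{x\}$ is open. By regularity, for each $y\in U$ choose an open $V_y\ni y$ with $x\notin\overline{V_y}$. As $X$ is hereditarily Lindel\"of, the open subspace $U$ is Lindel\"of, so the cover $\{V_y:y\in U\}$ admits a countable subcover $\{V_{y_n}:n\in\w\}$. Then $U=\bigcup_{n\in\w}V_{y_n}\subseteq\bigcup_{n\in\w}\overline{V_{y_n}}\subseteq U$, whence $\{x\}=\bigcap_{n\in\w}(X\setminus\overline{V_{y_n}})$ exhibits $\{x\}$ as a countable intersection of open sets.

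Next I would upgrade ``$G_\delta$'' to ``countable character''. Writing $\{x\}=\bigcap_{n\in\w}G_n$ with $G_n$ open and, using the regularity of compact Hausdorff spaces, arranging $\overline{G_{n+1}}\subseteq G_n$, I claim that $\{G_n\}_{n\in\w}$ is a neighborhood base at $x$. Indeed, for any open $W\ni x$ the compact sets $\overline{G_n}\setminus W$ are decreasing with $\bigcap_{n\in\w}(\overline{G_n}\setminus W)\subseteq\bigcap_{n\in\w}G_n\setminus W=\{x\}\setminus W=\emptyset$, so by compactness some $\overline{G_N}\setminus W=\emptyset$, i.e. $G_N\subseteq W$. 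Thus $X$ is first countable.

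Finally, $X$ is compact, hence countably compact, and I would combine this with first countability. Given a sequence $(x_n)_{n\in\w}$ in $X$, countable compactness yields a cluster point $p$, meaning every neighborhood of $p$ contains $x_n$ for infinitely many $n$. Taking a decreasing countable base $\{W_k\}_{k\in\w}$ at $p$, I recursively choose indices $n_0<n_1<\cdots$ with $x_{n_k}\in W_k$; then $(x_{n_k})_{k\in\w}$ converges to $p$, proving sequential compactness. The only genuinely nontrivial steps are the first two, where hereditary Lindel\"ofness together with the regularity supplied by compact Hausdorffness is exactly what delivers first countability; the extraction in the last step is routine. The main obstacle is thus establishing first countability, namely that each point is $G_\delta$ with countable character --- everything downstream is standard.
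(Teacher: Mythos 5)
Your argument is the classical one for \emph{Hausdorff} compact spaces, and within that setting it is correct: hereditary Lindel\"ofness makes each point a $G_\delta$-set, regularity upgrades $G_\delta$-points to points of countable character, and first countability plus countable compactness yields sequential compactness. The gap is your opening assumption: the ambient setting does \emph{not} guarantee Hausdorffness --- removing that hypothesis is precisely the point of the paper. Theorem~\ref{AW} is quoted, and is needed, for arbitrary compact spaces: in the proof of Theorem~\ref{t1} it is applied (via Corollary~\ref{c:seq}) to a compact subset $K$ of an arbitrary strongly $\sigma$-metrizable space $X$, with no separation axioms in force; the paper explicitly notes that the Hausdorff case of Theorem~\ref{t1} was already proved by Filipchuk, and its concluding Example (the cofinite topology on a countable set) shows that compact $\sigma$-metrizable spaces need not be Hausdorff. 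Note also that the paper gives no proof of Theorem~\ref{AW} at all --- it is imported from Alas--Wilson, whose contribution lies exactly in the weak-separation case where your route is unavailable; so what you have proved is a strictly weaker, classical statement.

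Moreover, the failure without Hausdorffness is not a repairable technicality but a collapse of the method: both of your ``genuinely nontrivial steps'' invoke regularity, which compactness supplies only together with Hausdorffness (the choice of $V_y$ with $x\notin\overline{V_y}$, and the shrinking $\overline{G_{n+1}}\subseteq G_n$). Concretely, the cofinite topology on $\omega_1$ is compact, $T_1$ and hereditarily Lindel\"of (every subspace is even compact), and it \emph{is} sequentially compact --- any injective sequence converges to every point, since a neighborhood $X\setminus F$ excludes at most $|F|$ of its terms --- but it is not first countable: given countably many neighborhoods $U_n=X\setminus F_n$ of a point $x$, choose $y\notin\{x\}\cup\bigcup_{n\in\w}F_n$; then $X\setminus\{y\}$ is a neighborhood of $x$ containing no $U_n$. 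So the implication ``compact and hereditarily Lindel\"of implies first countable'', on which your whole plan rests, is false in the generality required, and the non-Hausdorff case demands a genuinely different argument, which is why the paper cites Alas--Wilson rather than the standard $G_\delta$-point reasoning you reproduce.
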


This theorem implies

\begin{corollary}\label{c:seq} Each compact $\sigma$-metrizable topological space $X$ is hereditarily Lindel\"of and sequentially compact.
\end{corollary}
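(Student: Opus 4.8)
The plan is to reduce both conclusions to the single property of hereditary Lindel\"ofness, since sequential compactness then follows immediately from Theorem~\ref{AW}: once the compact space $X$ is known to be hereditarily Lindel\"of, that theorem delivers sequential compactness for free. So the real content is to verify that $X$ is hereditarily Lindel\"of.

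First I would note that each $X_n$, being a closed subspace of the compact space $X$, is itself compact. A compact metrizable space is separable, hence second countable, and every second-countable space is hereditarily Lindel\"of. Thus each $X_n$ is a hereditarily Lindel\"of subspace of $X$.

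Next I would invoke the stability of hereditary Lindel\"ofness under countable unions. Concretely, for an arbitrary subspace $S\subseteq X$ I would write $S=\bigcup_{n\in\w}(S\cap X_n)$. Each $S\cap X_n$ is a subspace of the hereditarily Lindel\"of space $X_n$, hence Lindel\"of. Since a countable union of Lindel\"of subspaces is Lindel\"of --- from any open cover one extracts a countable subcover on each piece and amalgamates them into a single countable subcover --- the subspace $S$ is Lindel\"of. As $S$ was an arbitrary subspace, $X$ is hereditarily Lindel\"of.

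Finally, $X$ is compact and hereditarily Lindel\"of, so Theorem~\ref{AW} yields sequential compactness, completing the proof. I expect no serious obstacle: the argument is essentially a routine assembly of standard facts, and the only point deserving a line of care is the passage of hereditary Lindel\"ofness to countable unions, which rests on the elementary amalgamation of countable subcovers just described.
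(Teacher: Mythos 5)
Your proposal is correct and follows essentially the same route as the paper: each $X_n$ is compact metrizable, hence hereditarily Lindel\"of, the countable union is then hereditarily Lindel\"of, and Theorem~\ref{AW} gives sequential compactness. You merely spell out two steps the paper leaves implicit (compact metrizable $\Rightarrow$ second countable $\Rightarrow$ hereditarily Lindel\"of, and the amalgamation of countable subcovers over the pieces $S\cap X_n$), which is fine.
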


\begin{proof}  The space $X$, being $\sigma$-metrizable, admits a countable cover $\{X_n\}_{n\in\w}$ by closed metrizable subspaces. Each metrizable subspace $X_n$, being closed in the compact space $X$, is compact and hence hereditarily Lindel\"of. Then the union $X=\bigcup_{n\in\w}X_n$ is hereditarily Linde\"of, too. By Theorem~\ref{AW}, the compact space $X$ is sequentially compact.
\end{proof}

For Hausdorff spaces the following characterization was proved in the PhD Thesis \cite{Fil} of Filipchuk.

\begin{theorem}\label{t1} A topological space is strongly $\sigma$-metrizable if and only if it is super $\sigma$-metrizable.
\end{theorem}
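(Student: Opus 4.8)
The plan is to prove the nontrivial implication: super $\sigma$-metrizable implies strongly $\sigma$-metrizable is immediate, since every convergent sequence, together with its limit, is a compact set and hence lies in some $X_n$. So I would focus on showing that \emph{strongly} $\sigma$-metrizable implies \emph{super} $\sigma$-metrizable. Let $X=\bigcup_{n\in\w}X_n$ witness strong $\sigma$-metrizability, with each $X_n$ closed metrizable, $X_n\subseteq X_{n+1}$, and every convergent sequence contained in some $X_n$. I want to show the same family witnesses super $\sigma$-metrizability, i.e.\ that every compact $K\subseteq X$ is contained in some $X_n$.

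So suppose for contradiction that some compact set $K\subseteq X$ is \emph{not} contained in any $X_n$. The key observation is that $K$ is itself a compact $\sigma$-metrizable space: the sets $K_n:=K\cap X_n$ form an increasing sequence of closed (in $K$) metrizable subspaces covering $K$. By Corollary~\ref{c:seq}, $K$ is therefore sequentially compact. Moreover, every convergent sequence in $K$ is a convergent sequence in $X$, hence lies in some $X_n$, and therefore in some $K_n$; thus $K$ is in fact a compact, strongly $\sigma$-metrizable space in its own right. This reduces the problem to the following: a compact strongly $\sigma$-metrizable space must already be equal to one of its $K_n$.

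To derive the contradiction, I would use the failure of containment to extract a bad sequence. Since no $K_n$ equals $K$, and the $K_n$ are closed, I can choose for each $n$ a point $x_n\in K\setminus K_n$. Using sequential compactness of $K$, pass to a convergent subsequence $x_{n_k}\to x\in K$. The limit point $x$ together with the tail $(x_{n_k})_k$ is a convergent sequence in $K$, hence by strong $\sigma$-metrizability of $K$ it is contained in some single $K_m$. But by construction $x_{n_k}\notin K_{n_k}$, and since $(K_n)$ is increasing, $x_{n_k}\notin K_j$ whenever $j\le n_k$; choosing $k$ large enough that $n_k\ge m$ gives $x_{n_k}\notin K_m$, contradicting that the whole sequence lies in $K_m$. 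Hence $K\subseteq X_n$ for some $n$, completing the proof.

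The main obstacle, and the step to handle carefully, is verifying that $K$ inherits strong $\sigma$-metrizability in the precise form needed — in particular that the inclusion $K_n=K\cap X_n$ really is closed and metrizable (immediate, since $X_n$ is closed metrizable and subspaces of metrizable spaces are metrizable) and that convergent sequences in $K$ remain convergent in $X$ (immediate from $K$ being a subspace). The genuinely essential input is Corollary~\ref{c:seq}, which supplies sequential compactness of $K$; without it one could not guarantee that the points $x_n$ chosen outside the $K_n$ admit a convergent subsequence, and the contradiction would not close. I would take some care that the indices align so that the chosen convergent subsequence escapes every fixed $K_m$ infinitely often, which is exactly what the increasing nature of the filtration provides.
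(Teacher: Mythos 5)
Your proposal is correct and follows essentially the same route as the paper: both reduce to showing a compact $K$ meeting every $K\setminus X_n$ would yield points $x_n\in K\setminus X_n$, invoke Corollary~\ref{c:seq} to get a convergent subsequence, and derive a contradiction from the index alignment with the increasing filtration. Your extra remark that $K$ is itself strongly $\sigma$-metrizable via $K_n=K\cap X_n$ is a harmless repackaging of the same argument.
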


\begin{proof} The ``only if'' part trivially follows from the observation that for any sequence $\{x_n\}_{n\in\w}\subset X$, convergent to a point $x\in X$, the subspace $K=\{x\}\cup\{x_n\}_{n\in\w}$ of $X$ is compact.

To prove the ``if'' part, assume that a topological space $X$  is strongly $\sigma$-metrizable. Then $X$ is the union of an increasing sequence $(X_n)_{n\in\w}$ of closed metrizable subspaces of $X$ such that every convergent sequence in $X$ is contained in some set $X_n$. To prove that $X$ is super $\sigma$-metrizable, we need to show that any compact subset $K\subset X$ is contained in some $X_n$. To derive a contradiction, assume that for every $n\in\w$ the complement $K\setminus X_n$ is not empty and hence contains some point $x_n$. By Corollary~\ref{c:seq}, the compact $\sigma$-metrizable space $K$ is sequentially compact. Consequently, there exists an increasing number sequence $(n_i)_{i\in\w}$ such that the subsequence $(x_{n_i})_{i\in\w}$ of $(x_n)_{n\in\w}$ converges in $K$. The choice of the sequence $(X_n)_{n\in\w}$ guarantees that $\{x_{n_i}\}_{i\in\w}\subset X_m$ for some $m\in\w$. Choose a number $i\in\w$ with $n_i\ge m$ and observe that the inclusion $x_{n_i}\in X_m$ contradicts the choice of $x_{n_i}\in X\setminus X_{n_i}\subset X\setminus X_m$.
This contradiction shows that $K\subset X_n$ for some $n\in\w$, which means that the space $X$ is super $\sigma$-metrizable.
\end{proof}

Theorem~\ref{t1} and \cite[3.1.19]{En} implies an interesting metrization theorem for compact topological spaces.

\begin{corollary}\label{c1} For a compact space $X$ the following conditions are equivalent:
\begin{enumerate}
\item $X$ is metrizable;
\item $X$ is Hausdorff and $\sigma$-metrizable;
\item $X$ is strongly $\sigma$-metrizable;
\item $X$ is super $\sigma$-metrizable.
\end{enumerate}
\end{corollary}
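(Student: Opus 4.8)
The plan is to establish all equivalences through the chain $(1)\Leftrightarrow(2)$ together with $(1)\Rightarrow(4)\Leftrightarrow(3)\Rightarrow(1)$, invoking Theorem~\ref{t1} for the equivalence $(3)\Leftrightarrow(4)$ and \cite[3.1.19]{En} for the one genuinely nontrivial metrization step. Most of the implications are routine, and the whole argument is driven by two observations: that compactness lets a super $\sigma$-metrizable space swallow itself into a single $X_n$, and that a countable union of compact metrizable pieces has a countable network.

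First I would dispatch the easy directions. For $(1)\Rightarrow(2)$ I note that a metrizable space is Hausdorff and is trivially $\sigma$-metrizable via the constant sequence $X_n=X$, $n\in\w$. That same constant sequence witnesses $(1)\Rightarrow(4)$, since every compact subset of $X$ lies in $X_0=X$. The equivalence $(3)\Leftrightarrow(4)$ is precisely the content of Theorem~\ref{t1}, so I may use it as a black box.

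The implication $(4)\Rightarrow(1)$ is where compactness is used decisively. If $X$ is super $\sigma$-metrizable with witnessing increasing sequence $(X_n)_{n\in\w}$, then the compact subset $K=X$ must be contained in some $X_n$; as $X_n\subseteq X$ this forces $X=X_n$, and $X_n$ is metrizable, whence $X$ is metrizable. At this point conditions $(1)$, $(3)$, and $(4)$ are all equivalent.

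It remains to prove $(2)\Rightarrow(1)$, which I expect to be the main obstacle. Here $X$ is compact Hausdorff with $X=\bigcup_{n\in\w}X_n$ for closed metrizable $X_n$. Being closed in a compact space, each $X_n$ is compact metrizable, hence second countable, and in particular admits a countable network $\mathcal N_n$. Then $\bigcup_{n\in\w}\mathcal N_n$ is a countable network for $X$: given $x\in X$ and an open $U\ni x$, choose $n$ with $x\in X_n$ and a member of $\mathcal N_n$ lying between $x$ and the relatively open set $U\cap X_n$. Thus $X$ is a compact Hausdorff space of countable network weight, so by \cite[3.1.19]{En} its weight equals its network weight and $X$ is second countable; being also regular (as a compact Hausdorff space), it is metrizable by Urysohn's metrization theorem. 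Assembling $(1)\Leftrightarrow(2)$ with the already-established equivalence of $(1)$, $(3)$, $(4)$ completes the proof.
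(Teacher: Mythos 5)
Your proof is correct and follows exactly the route the paper intends: the paper gives no explicit argument, merely noting that the corollary follows from Theorem~\ref{t1} together with \cite[3.1.19]{En}, and your write-up supplies precisely those two ingredients --- Theorem~\ref{t1} for $(3)\Leftrightarrow(4)$ and the equality $w(X)=nw(X)$ for compact Hausdorff spaces (via the countable network assembled from the second-countable pieces $X_n$) for $(2)\Rightarrow(1)$ --- with the remaining implications handled by the same easy observations. No gaps.
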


The Hausdorff requirement in Corollary~\ref{c1}(2) is essential as shown by the following example.

\begin{example} Let $X$ be a countable infinite space endowed with the Zariski topology $$\tau=\{\emptyset\}\cup\{X\setminus F:\mbox{$F$ is finite}\}.$$
The space $X$ is compact and $\sigma$-metrizable but Hausdorff and not metrizable.
\end{example}

\end{document}